\title{Index formulae for integral Galois modules}
\author{Alex Bartel}
\address{Department of Mathematics,
Warwick University,
Coventry CV4 7AL,
UK}
\email{a.bartel@warwick.ac.uk}
\thanks{The first author is supported by a research fellowship of the
Royal Commission for the Exhibition of 1851.}
\author{Bart de Smit}
\address{
Mathematisch Instituut,
Universiteit Leiden, Postbus 9512,
2300 RA Leiden,
The Netherlands}
\email{desmit@math.leidenuniv.nl}
\subjclass[2010]{Primary: 11R33, 20C10, Secondary: 11R70, 11G05, 19A22, 19F27}
\begin{document}
\maketitle

\begin{abstract}
We prove very general index formulae for integral Galois modules,
specifically for units in rings of integers of number fields, for higher $K$--groups
of rings of integers, and for Mordell-Weil groups of elliptic curves over number fields.
These formulae link the respective Galois module structure to other arithmetic
invariants, such as class numbers, or Tamagawa numbers and Tate--Shafarevich groups.
This is a generalisation of known results on units to other Galois modules
and to many more Galois groups, and at the same time a unification of the
approaches hitherto developed in the case of units.
\end{abstract}
%\tableofcontents
%\newpage
%\pagestyle{fancy}
%\fancyhf{} % clear all header and footer fields
%\fancyhead[C]{\textsc{Alex Bartel, Bart de Smit -- Index formulae for Galois modules}}
%\fancyfoot[C]{\thepage}

\section{Introduction}
Let $F/K$ be a Galois extension of number fields and $\cO_F$ the ring
of integers of $F$. It is a classical problem to investigate relationships
between $\cO_F^\times$ as a Galois module and class numbers of intermediate
extensions.
One shape that such relationships can take is that of unit index formulae,
such as \cite[Proposition 4.1]{Bart} for elementary abelian Galois groups,
or \cite{Alex} for dihedral groups. The introductions to these
papers contain an overview of some of the history of the problem and further references.

In this work, we generalise these index formulae to a
large class of Galois groups and to various different Galois
modules, namely to units of rings of integers of number fields,
to higher $K$--groups thereof, and to Mordell--Weil groups of elliptic curves
over number fields. In fact, we develop an algebraic machine that produces
such formulae in a great variety of contexts. In the introduction,
we will begin by stating some concrete applications, and will progress to ever
more general results.

\begin{theorem}\label{thm:ellcurvesG20}
  Let $G=C_5\rtimes C_4$ be the semidirect product of a cyclic group of order
  4 acting faithfully on a normal cyclic group of order 5.
  Let $F/K$ be a Galois extension of number fields with Galois group $G$, 
  let $N_0$ be the unique intermediate
  extension of degree 4, $N_1,\ldots,N_4$ distinct intermediate extensions
  of degree 5, and let $E/K$ be an elliptic curve. Let the map
  \[
    f_E : \bigoplus_{i=0}^4E(N_i)/E(K)\longrightarrow E(F)/E(K)
  \]
  be induced by inclusion of each summand. Denote the product of Tamagawa
  numbers of $E$ at the finite places of a number field by $C(E/-)$.
  The remaining notation will be recalled
  in \S\ref{sec:ellcurves}. Assume for simplicity that $E$
  has finite Tate--Shafarevich groups over all intermediate extensions of $F/K$
  (we actually prove an unconditional result, see Remark \ref{rem:uncond}). 
  Then
  \begin{eqnarray*}
    \frac{C(E/F)\#\sha(E/F)\left(C(E/K)\#\sha(E/K)\right)^{4}}
    {\prod_{i=0}^4 C(E/N_i)\#\sha(E/N_i)} =
    5^x\cdot
    \frac{\left[E(F):\sum_{i=0}^4 E(N_i)\right]^2}{|\ker f_E|^2},
  \end{eqnarray*}
  where $x={7\rk E/K - \rk E/N_0 - 3\rk E/N_1}$, and where the sum in the index
 is taken inside $E(F)$.
\end{theorem}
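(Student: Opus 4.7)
The plan is to apply the general algebraic machinery developed in this paper to a Brauer relation in the rational representation ring of $G=C_5\rtimes C_4$, specialised to the Mordell--Weil module $M=E(F)/E(F)_{\mathrm{tors}}$. The group $G$ has four linear characters factoring through $G/C_5\cong C_4$ and a single $4$-dimensional irreducible induced from any non-trivial character of $C_5$. Decomposing the permutation characters $\mathbb{Q}[G/H]$ for $H\in\{G,C_5,C_4,\{1\}\}$ yields the linear dependence
\[
\Theta \;=\; 4[G/G] + [G/\{1\}] - [G/C_5] - 4[G/C_4] \;=\; 0
\]
in $R_{\mathbb{Q}}(G)$; the fixed fields of these subgroups are $K$, $F$, $N_0$, and (a representative of) the $N_i$ appearing in the theorem.

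Next I would invoke the main theorem of the paper, applied to $\Theta$ and the arithmetic lattice $M$. That theorem should produce an identity of shape
\[
\prod_H \bigl(C(E/F^H)\cdot\#\sha(E/F^H)\bigr)^{a_H} \;=\; (\text{prime-power defect})\cdot \mathcal{R}_\Theta(M),
\]
where $\mathcal{R}_\Theta(M)$ is the regulator constant of $M$ relative to $\Theta$, and the defect accounts for the failure of $\Theta$ to lift to a relation in the integral representation ring at primes dividing $|G|$. For $F_{20}$ the $2$-local behaviour is governed by the Sylow $2$-subgroup $C_4$, over which the relation lifts integrally; only the prime $5$ contributes. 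That $5$-adic defect should evaluate to $5^x$ for a linear combination $x$ of ranks determined by the $\mathbb{Z}_5[G]$-isotypic decomposition of $M\otimes\mathbb{Z}_5$, and an explicit modular character computation should give $x=7\,\rk E/K-\rk E/N_0-3\,\rk E/N_1$.

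Finally, I would identify $\mathcal{R}_\Theta(M)$ with the concrete index $[E(F):\sum_{i=0}^4 E(N_i)]^2/|\ker f_E|^2$. The map $f_E$ is precisely the Brauer-relation-induced morphism whose kernel and cokernel capture the failure of $M$ to decompose compatibly with $\Theta$; the square reflects the symmetry of the N\'eron--Tate height pairing on $E(F)\otimes\mathbb{R}$. The main obstacle is this last step: the regulator constant is defined abstractly in terms of sublattices of $M\otimes\mathbb{Q}$, and matching it with the cokernel and kernel of the specific arithmetic map $f_E$ requires a careful concrete model for $M$ together with delicate torsion bookkeeping. A secondary difficulty is the unconditional version of Remark~\ref{rem:uncond}: without finiteness of $\sha$, one must replace cardinalities by orders of divisible subgroups and by appropriate algebraic Euler characteristics, and verify that the identity still holds.
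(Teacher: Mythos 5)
You have correctly identified the Brauer relation $\Theta = 4\cdot G + 1 - C_5 - 4\cdot C_4$ in $C_5\rtimes C_4$, and the overall strategy (regulator constant plus Artin formalism for BSD plus a conversion of the regulator constant into an index) is indeed what the paper does. But the step you flag yourself as the ``main obstacle'' --- passing from the abstract regulator constant $\cC_\Theta(M)$ to the concrete ratio $\bigl[E(F):\sum E(N_i)\bigr]^2/|\ker f_E|^2$ --- is precisely the technical core of the paper, and your proposal does not supply it. The machinery needed is: (i) an explicit injection $\phi:\Z[G/1]\oplus\Z^{\oplus 5}\hookrightarrow\Z\oplus\bigoplus_{H\in\cU}\Z[G/H]$ of permutation modules (equation~(\ref{eq:phi})), chosen specifically so that applying $\Hom_G(-,M)$ and the snake lemma makes $\#\coker$ equal to the index $[M:\sum_H M^H]$; (ii) Lemma~\ref{lem:regconsts}, expressing $\cC_\Theta(M)$ through $\coker(\phi,M)$, $\coker(\phi^{\tr},M)$, their kernels, and torsion; and (iii) Proposition~\ref{prop:fixedphi}, which shows the correction factor $c_\phi(M)$ depends only on $M\otimes\Q$ and hence only on ranks of fixed spaces. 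Without these, the index does not materialise.

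There is also a conceptual slip in your description of the $5$-power factor. You attribute it to the ``failure of $\Theta$ to lift to a relation in the integral representation ring'' and claim it is governed by the ``$\Z_5[G]$-isotypic decomposition of $M\otimes\Z_5$.'' Neither is quite right: the exponent $x = 7\rk E/K - \rk E/N_0 - 3\rk E/N_1$ is the value of the function $\alpha([E(F)_\Q]) = c_{\phi_G}(M)/c_\phi(M)$, which by Proposition~\ref{prop:fixedphi} depends only on the \emph{rational} representation $E(F)\otimes\Q$ and not on any finer ($5$-adic or integral) structure of $E(F)$ --- this independence of the integral lattice is the whole point of that proposition. The paper obtains the numerical values of $\alpha$ by picking one $G$-lattice in each rational irreducible of $G$ and computing cokernels of $(\phi,M)$ and $(\phi^{\tr},M)$ directly, then using Artin induction to extend to arbitrary $V$ via the linear relation among $\dim V^G, \dim V^{D_{10}}, \dim V^{C_5}, \dim V^{C_4}$. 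Your plan would need to be restructured around that calculation rather than a $5$-modular one.
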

\begin{remark}
The index on the right hand side carries information about the $\Z[G]$-module
structure of $E(F)$.
This information goes far beyond the ranks
in the sense that there exist $\Z[G]$-modules whose complexifications are
isomorphic, but which can be distinguished by the index. Thus,
the theorem says that Tamagawa numbers and Tate--Shafarevich groups control
to some extent the fine integral Galois module structure of the Mordell-Weil
group.
\end{remark}

In fact, already the approach in \cite{Alex} easily generalises to the aforementioned
Galois modules, with the only necessary extra ingredient being a compatibility of
standard conjectures on special values of zeta and $L$--functions with Artin formalism.
However, it uses the classification of all integral representations
of dihedral groups, and therefore does not directly extend to other Galois groups.

The techniques in \cite{Bart} on the other hand generalise to a large class of groups.
But they rely on a direct computation with units and are not immediately applicable
to other Galois modules. In particular, Theorem \ref{thm:ellcurvesG20} is not accessible
by either approach.

The main achievement of the present work is a unification and generalisation
of the approaches of \cite{Alex} and \cite{Bart}. Proposition \ref{prop:fixedphi}
is of central importance in this endeavour. A by-product of this unification will be
a more conceptual proof of the already known index formulae and a better
understanding of the algebraic concepts involved.

\begin{assumption}\label{ass:gp}
In the next three theorems, the combination of a group $G$ and a set $\cU$ of subgroups
of $G$ will be one of the following:
\begin{itemize}
\item $G=C_p\rtimes C_n$ a semidirect product of non-trivial cyclic groups with $p$
a prime and with $C_n$ acting faithfully on $C_p$, and with $\cU$ consisting
of $n$ distinct subgroups of $G$ of order $n$ and the normal subgroup of order $p$; or
\item an elementary abelian $p$--group, with $\cU$ consisting of all index $p$
subgroups; or
\item a Heisenberg group of order $p^3$, where $p$ is an odd prime, and with
$\cU$ consisting of the unique normal subgroup $N$ of order $p^2$ and of $p$
non-conjugate cyclic subgroups of order $p$ that are not contained in $N$; or
\item any other group $G$ and set of subgroups $\cU$ such that the map $\phi$
defined by (\ref{eq:phi}) is an injection of $G$-modules with finite cokernel.
\end{itemize}
\end{assumption}

Below, $R_\Q(G)$ denotes the Grothendieck
group of the category of finitely generated $\Q[G]$--modules. For a $G$-module
$M$, $M_\Q$ and $M_{\Z_p}$ denote the $G$-modules $M\otimes_\Z\Q$
and $M\otimes_\Z\Z_p$, respectively. When no confusion can arise, we will
treat $\Q[G]$--modules synonymously with their image in $R_\Q(G)$.
%The remaining undefined notation will be recalled in the next section.
\namedthm{U}\label{thm:u}
Given $G$ and $\cU$ as in Assumption \ref{ass:gp},
there exists an explicitly computable group homomorphism
\[
\alpha_G:R_\Q(G)\longrightarrow \Q^\times
\]
and an explicit $\Q^\times$--valued map $\beta(H)$ on conjugacy classes
of subgroups of $G$, such that
for any Galois extension $F/K$ of number fields with Galois group $G$ and
for any finite $G$--stable set $S$ of places of $F$ containing the Archimedean ones, we have
\begin{eqnarray}
\lefteqn{\frac{h_S(F)h_S(K)^{|\cU|-1}}{\prod_{H\in\cU}h_S(F^H)} =}\\
& & \left(\alpha_G((\cO^\times_{F,S})_\Q)\cdot\prod_{\fp\in S|_K}\beta(D_{\fp})\cdot\frac{\prod_{H\in\cU}|H|}{|G|^{|\cU|-1}}\right)^{1/2}\cdot
\frac{\left[\cO_{F,S}^\times:\prod_{H\in\cU}\cO^\times_{F^H,S}\right]}{|\ker f_{\cO}|}\nonumber,
\end{eqnarray}
where $S|_K$ denotes the places of $K$ lying below those in $S$, $h_S$ denotes
$S$-class numbers, and where the map 
\begin{eqnarray*}
f_{\cO} : \prod_{H\in\cU}\cO^\times_{F^H,S}/\cO^\times_{K,S}
\longrightarrow \cO^\times_{F,S}/\cO^\times_{K,S}
\end{eqnarray*}
is induced by inclusions
$\cO^\times_{F^H,S}/\cO^\times_{K,S}\hookrightarrow\cO^\times_{F,S}/\cO^\times_{K,S}$, $H\in \cU$.
\endnamedthm

\namedthm{K}\label{thm:k}
Given $G$ and $\cU$ as in Assumption \ref{ass:gp},
there exists an explicitly computable group homomorphism
\[
\alpha_G:R_\Q(G)\longrightarrow \Q^\times
\]
(the same as in Theorem \ref{thm:u}) such that for any Galois extension 
of number fields $F/K$ with Galois group $G$, and for $n\geq 2$,
\begin{eqnarray}
\lefteqn{
\frac{|K_{2n-2}(\cO_{F})||K_{2n-2}(\cO_{K})|^{|\cU|-1}}{\prod_{H\in\cU}|K_{2n-2}(\cO_{F^H})|}
=_{2'}}\\
& & \alpha_G(K_{2n-1}(\cO_F)_\Q)^{1/2}\frac{\left[K_{2n-1}(\cO_F):\sum_{H\in\cU}
 K_{2n-1}(\cO_{F^H})\right]}{|\ker f_K|}\nonumber,
\end{eqnarray}
where $=_{2'}$ denotes equality up to an integer power of 2. Here, we have slightly
abused notation by writing
$\left[K_{2n-1}(\cO_F):\sum_{H\in\cU}K_{2n-1}(\cO_{F^H})\right]$ when we mean
\[
\prod_{p\neq 2} \left[K_{2n-1}(\cO_F)_{\Z_p}:\sum_{H\in\cU}
 K_{2n-1}(\cO_{F^H})_{\Z_p}\right],
\]
and $|\ker f_K|$ instead of
$\prod_{p\neq 2}|\ker f_{K,p}|$, where
\begin{eqnarray*}
f_{K,p} : \bigoplus_{H\in \cU}K_{2n-1}(\cO_{F^H})_{\Z_p}/K_{2n-1}(\cO_{K})_{\Z_p}\longrightarrow
K_{2n-1}(\cO_{F})_{\Z_p}/K_{2n-1}(\cO_{K})_{\Z_p}
\end{eqnarray*}
is induced by inclusions (cf. \S\ref{sec:Kgroups})
\[
K_{2n-1}(\cO_{F^H})_{\Z_p}/K_{2n-1}(\cO_{K})_{\Z_p}\hookrightarrow
K_{2n-1}(\cO_{F})_{\Z_p}/K_{2n-1}(\cO_{K})_{\Z_p},\;\;H\in \cU.
\]

\endnamedthm
Most of the literature on the structure of Mordell--Weil groups of elliptic curves centres of course
around questions about the rank. Here, we address the question: assuming that
we know everything about ranks, what can we say about the finer integral structure
of these Galois modules?
The cleanest statements are obtained if one assumes that
the relevant Tate--Shafarevich groups are finite, but we also derive
an unconditional analogue.

\namedthm{E}\label{thm:e}
Given $G$ and $\cU$ as in Assumption \ref{ass:gp},
there exists an explicitly computable group homomorphism
\[
\alpha_G:R_\Q(G)\longrightarrow \Q^\times
\]
(the same as in Theorems \ref{thm:u} and \ref{thm:k})
such that for any Galois extension of number fields $F/K$ with Galois group $G$ and for any
elliptic curve $E/K$ with $\sha(E/F^H)$ finite for all $H\leq G$, we have
\begin{eqnarray}\label{eq:mainE}
\frac{C(E/F)\#\sha(E/F)\left(C(E/K)\#\sha(E/K)\right)^{|\cU|-1}}
{\prod_{H\in \cU}C(E/F^H)\#\sha(E/F^H)} =\\
\alpha(E(F)_\Q)
\frac{\left[E(F):\sum_{H\in \cU}E(F^H)\right]^2}{|\ker f_E|^2}\nonumber,
\end{eqnarray}
where the map
\[
f_E : \bigoplus_{H\in \cU}E(F^H)/E(K)\longrightarrow
E(F)/E(K)
\]
is induced by inclusions $E(F^H)/E(K)\hookrightarrow E(F)/E(K)$, $H\in \cU$.
\endnamedthm
\begin{remark}
Artin's induction theorem implies that given any $\Q[G]$--module $V$,
$\alpha(V)$ is determined by the dimensions
$\dim V^H$ for all $H\leq G$. Therefore, Theorem
\ref{thm:ellcurvesG20} is a special case of Theorem \ref{thm:e}, while
Theorem \ref{thm:u} is a direct generalisation of \cite[Proposition 4.1]{Bart}
and \cite[Theorem 1.1]{Alex}.
\end{remark}
\begin{remark}\label{rem:uncond}
Combining equations (\ref{eq:ellcurves}) and (\ref{eq:final}) yields an
unconditional version of (\ref{eq:mainE}).
Unlike in the case of units, this is, to our knowledge,
the first such index formula for elliptic curves.
\end{remark}
\begin{remark}
It seems rather striking, that the function $\alpha$ is the same in all three theorems.
Thus, it is not only independent of the realisation of $G$ as a Galois group (and in particular of the
base field), but even independent of the Galois module in question.
In \S\ref{sec:examples},
we give an explicit example of how to compute $\alpha$ for a concrete group,
thereby deducing Theorem \ref{thm:ellcurvesG20} from Theorem \ref{thm:e}.
The function $\beta$ in Theorem \ref{thm:u} is trivial on cyclic subgroups.
In particular, the corresponding product
vanishes when $S$ is just the set of Archimedean places.
\end{remark}
\begin{remark}
Brauer relations have been used by many authors to obtain relations between
arithmetic invariants of number fields, of algebraic varieties, etc. See e.g.
\cite{KR-94}, \cite{BB-04}, and the references therein, to name but a few works.
Here, the focus is on using Brauer relations to obtain explicit Galois module
theoretic information.
\end{remark}

The above theorems are special cases of the representation theoretic
machine we develop. In its maximal generality, our result may be stated as
follows (the necessary concepts, particularly that of Brauer relations and of
regulator constants, will be recalled in the next section):
\namedthm{R}\label{thm:R}
Let $G$ be a finite group, and let $(H_i)_i$, $(H_j')_j$ be two finite
sequences of subgroups of $G$ such that there is an injection
\[
\phi:P_1=\bigoplus_i \Z[G/H_i]\longrightarrow \bigoplus_j \Z[G/H_j']=P_2
\]
of $\Z[G]$--permutation modules with finite cokernel (thus,
$\Theta = \sum_i H_i - \sum_j H_j'$ is a Brauer relation in the sense of
Definition \ref{def:BrauerRel}).
For a finitely generated $\Z[G]$--module $M$, write $(P_i,M) = \Hom_G(P_i,M)$,
$i=1,2$, and $(\phi,M)$ for the induced map $(P_2,M)\rightarrow (P_1,M)$.
Finally, denote by $\cC_{\Theta}(M)$ the regulator constant of $M$ with respect
to the Brauer relation $\Theta = \sum_i H_i - \sum_j H_j'$
(see Definition \ref{def:RegConst}). Then,
the quantity
\[
\cC_{\Theta}(M)\cdot\frac{|\coker(\phi,M)|^2}{|\ker(\phi,M)|^2}\frac{|(P_2,M)_{\tors}|^2}{|(P_1,M)_{\tors}|^2}
\]
only depends on the isomorphism class of the $\Q[G]$-module $M\otimes_\Z\Q$.
\endnamedthm

The significance of this result is that regulator constants of Galois modules can be linked
to quotients of their regulators, which in turn are linked to other number theoretic
invariants through special values of $L$--functions. On the other hand,
by making judicious choices of $\phi$, one can turn the cokernel in
Theorem \ref{thm:R} into an index, such as e.g. the one in equations
(U), (K) and (E), or the index of the image of the $G$-module $M$
in $\prod_{H\in \cU}M^H$ under the norm maps, or other natural invariants
of Galois modules.
The function $\alpha$ and the map $f$ then change, depending on the particular
index that one chooses to investigate,
but the left hand side of the equations (U), (K) and (E) does not.

The main object of study will be regulator constants, as defined in
\cite{tamroot}. We begin by recalling in \S \ref{sec:numthry}
how certain quotients of classical regulators of number fields, of Borel
regulators, and of regulators of elliptic curves can be translated into
regulator constants.
In \S\ref{sec:regconsts} and \S\ref{sec:indices}, we will set up a framework
that allows one to translate
regulator constants into indices in a purely representation theoretic
setting. The main new input is Proposition \ref{prop:fixedphi}, which is the
generalisation of \cite[end of \S 4]{Alex} to arbitrary finite groups. With that
in place, we can essentially follow the strategy of \cite[\S 4]{Bart}.
This procedure works under a certain condition on the Galois group, which is
satisfied e.g. by all the groups listed in Assumption \ref{ass:gp}
(see beginning of \S \ref{sec:indices}).
It is an interesting purely group theoretic problem to determine all groups
that satisfy this condition, which we will not address here. Theorems \ref{thm:u},
\ref{thm:k} and \ref{thm:e}
are obtained by simply substituting the regulator constant--index relationship,
as expressed by (\ref{eq:final}),
in any of the number theoretic situations discussed in the next section.

The quotients of number theoretic data
that appear in all these theorems come from Brauer relations
(see \S\ref{sec:numthry}). Brauer proved that any such quotient of numbers of
roots of unity in number fields is a power of two. As a completely independent result, we show
in Proposition \ref{prop:Ktorsion} that the same is true for $(K_{2n-1})_{\tors}$,
which is the analogue of numbers of roots of unity that appears in Lichtenbaum's
conjecture.
\begin{acknowledgements}
A significant part of this research was done while both authors took part in
the 2011 conference on Galois module structures in Luminy. We thank the organisers
of the conference for bringing us together and the CIRM for hosting the conference.
We would also like to thank Haiyan Zhou for pointing out two inaccuracies in
an earlier draft.
\end{acknowledgements}

%****************************
%----------------------
\section{Artin formalism and regulator constants}\label{sec:numthry}
%----------------------
%****************************

We begin by recalling the definitions of Brauer relations and of regulator
constants and their relevance for number theory.
\begin{definition}\label{def:BrauerRel}
Let $G$ be a finite group. We say that the formal linear combination
$\sum_H n_H H$ of subgroups of $G$ is a \emph{Brauer relation} in $G$
if the virtual
permutation representation $\oplus_H \C[G/H]^{\oplus n_H}$ is zero.
\end{definition}

\begin{notation}\label{not:tors}
For any abelian group $A$, we write $\overline{A}$ for $A/A_{\tors}$.
For any homomorphism of abelian groups $f:A\rightarrow B$,
write $\overline{f}$ for the induced homomorphism $\overline{A}\rightarrow \overline{B}$
and $f_{\tors}$ for the restriction $A_{\tors}\rightarrow B_{\tors}$.
\end{notation}

\begin{definition}\label{def:RegConst}
Let $G$ be a finite group and $M$ a finitely generated $\Z[G]$--module.
Let $\langle\cdot,\cdot\rangle:M\times M\rightarrow \C$ be a bilinear $G$--invariant
pairing that is non--degenerate on $\overline{M}$. Let $\Theta=\sum_{H\leq G} n_H H$
be a Brauer relation in $G$.
Define the regulator constant of $M$ with respect to $\Theta$ by
\[
\cC_{\Theta}(M) = \prod_{H\leq G}\det\left(\frac{1}{|H|}\langle\cdot,\cdot\rangle|\overline{M^H}\right)^{n_H}\in \C^{\times},
\]
where each determinant is evaluated on any $\Z$-basis of $\overline{M^H}$.
\end{definition}
This definition is independent of the choice of pairing \cite[Theorem 2.17]{tamroot}.
In particular, since one can always choose a $\Q$-valued pairing, $\cC_\Theta(M)$
lies in $\Q^\times$.

Regulator constants naturally arise in number theory. The heart of the paper
will be an investigation of the regulator constants themselves and their relationship
with indices of fixed submodules in a given module. The results
can then be applied to any of the equations (\ref{eq:units}), (\ref{eq:Kgroups}),
or (\ref{eq:ellcurves}) to derive immediate number theoretic consequences.

For the rest of the section, fix
a Galois extension $F/K$ of number fields with Galois group $G$ and
a Brauer relation $\Theta=\sum_H n_H H$ in $G$. We embed $F$ and all other extensions
of $K$ that we will consider inside a fixed algebraic closure of $K$.
\subsection{Units in rings of integers of number fields}
Let $S$ be a finite $G$--stable set of places of $F$ containing all the Archimedean
ones.
Then Artin formalism for Artin $L$--functions, combined with the analytic class number
formula, implies that
\[
\prod_H \left(\frac{h_S(F^H)\Reg_S(F^H)}{w(F^H)}\right)^{n_H} = 1,
\]
where $h_S$ denotes $S$--class numbers, $\Reg_S$ denotes $S$--regulators and
$w$ denotes the numbers of roots of unity.
One can show \cite[Proposition 2.15]{Alex} that
\[
\cC_\Theta(\cO_{S,F}^\times) = \frac{\cC_\Theta(\triv)}{\prod\limits_{\fp\in S|_K}\cC_\Theta(\Z[G/D_\fp])}\cdot
\prod_{H\leq G} \Reg_S(F^{H})^{2n_H},
\]
where $S|_K$ is the set of primes of $K$ lying below those in $S$, and
$D_{\fp}$ denotes the decomposition group of a prime of $F$ above $\fp$.
It follows that
\begin{eqnarray}\label{eq:units}
\prod_H\left(\frac{w(F^H)}{h_S(F^H)}\right)^{2n_H}\cdot
\frac{\cC_\Theta(\triv)}{\prod\limits_{\fp\in S|_K}\cC_\Theta(\Z[G/D_\fp])}
=\cC_\Theta(\cO_{S,F}^\times).
\end{eqnarray}
The factor $\cC_\Theta(\triv)\big/\prod_{\fp\in S|_K}\cC_\Theta(\Z[G/D_\fp])$
can be made very explicit for any concrete $G$. For example, regulator constants
of permutation modules corresponding to cyclic subgroups are always trivial
\cite[Lemma 2.46]{tamroot}, and in particular the denominator vanishes if $S$
is the set of Archimedean places of $F$. Otherwise, it is an easily computable function of
the number of primes in $S$ with given splitting behaviour. It is always a
rational number, which has non--trivial $p$--adic order only for those primes $p$
that divide $|G|$. See \cite[Theorem 1.2]{Alex} for an even stronger restriction.

\subsection{Higher $K$--groups of rings of integers}\label{sec:Kgroups}
Let $\sigma_i:F\hookrightarrow \R$, $1\leq i\leq r_1$ be the real places of $F$
and $\sigma_{j}:F\hookrightarrow \C$, $r_1+1\leq j\leq r_1+r_2$ be representatives
of the complex places. Let $n\geq 2$, and set $d=r_1+r_2$ if $n$ is odd and $d=r_2$
if $n$ is even. Borel has constructed maps
\[
K_{2n-1}(\cO_F)\stackrel{\oplus(\sigma_i)_*}{\longrightarrow}\bigoplus_{i=1}^d K_{2n-1}(\C)\stackrel{\oplus B_n}{\longrightarrow}\bigoplus_{i=1}^d\R
\]
whose composition has kernel precisely equal to the torsion subgroup,
and showed \cite{Bor-74} that the image is a full rank lattice. The $n$--th
Borel regulator $\Reg_n(F)$ is then defined as the covolume of this image.

If $F/K$ and $G$ are as before, and $H$ is a subgroup of $G$, then for any odd prime $p$,
we have
\[
(K_{2n-1}(\cO_F)_{\Z_p})^H \cong K_{2n-1}(\cO_{F^H})_{\Z_p}
\]
(see e.g. \cite[Proposition 2.9]{Kol-02}). We will therefore treat
$K_{2n-1}(\cO_{F^H})_{\Z_p}$ as a subgroup of $K_{2n-1}(\cO_F)_{\Z_p}$.
We have suppressed all the details concerning the normalisation of $B_n$,
for which we refer to \cite{Gil-01}, but note
that these details will be irrelevant for us.
What is important, is that if for a number field $N$ we define a pairing
$\langle\cdot,\cdot\rangle_N$ on $K_{2n-1}(\cO_N)$ by
\[
\langle u,v\rangle_N = \sum_{i=1}^d B_n((\sigma_i)_*u) B_n((\sigma_i)_*v),
\]
then $\langle\cdot,\cdot\rangle_F$ is $G$--invariant, and for $N\leq F$ we have
$\langle\cdot,\cdot\rangle_F|_{K_{2n-1}(\cO_N)} = [F:N]\langle\cdot,\cdot\rangle_N$. Moreover,
by definition, $\Reg_n(N)^2 = \det(\langle u_i,u_j\rangle_N)$ where $u_i,u_j$
range over a basis of $\overline{K_{2n-1}(\cO_N)}$. The Lichtenbaum conjecture on
leading coefficients of Dedekind zeta functions at $1-n$ together with aforementioned
Artin formalism predicts that
\begin{eqnarray}\label{eq:KArtin}
\prod_H\left(\frac{\Reg_n(F^H)|K_{2n-2}(\cO_{F^H})|}{|K_{2n-1}(\cO_{F^H})_{\tors}|}\right)^{n_H}=_{2'}1,
\end{eqnarray}
where $=_{2'}$ denotes equality up to an integer power of 2. In fact,
%the prime-to-2 part of
the Lichtenbaum conjecture is known to be compatible with Artin formalism
\cite{Bur-10}, so equation (\ref{eq:KArtin}) is true unconditionally.
In view of the above discussion, we also get unconditionally
\begin{eqnarray}\label{eq:Kgroups}
\prod_H\left(\frac{|K_{2n-1}(\cO_{F^H})_{\tors}|}{|K_{2n-2}(\cO_{F^H})|}\right)^{2n_H} =_{2'} \cC_\Theta(K_{2n-1}(\cO_F)).
\end{eqnarray}

\subsection{Torsion in higher $K$-groups of rings of integers}
Brauer proved in \cite{Bra-51} that the quotient $\prod_H w(F^H)^{n_H}$
appearing in (\ref{eq:units}) is a power of 2 for any Galois extension
$F/K$ with Galois group $G$ and for any Brauer relation $\sum_H n_HH$.

It is natural to ask whether there is an analogue of this result for higher
$K$-groups. We answer this question affirmatively.
The following result is completely independent of the rest of the paper:
\begin{proposition}\label{prop:Ktorsion}
Let $G$ be a finite group and let $\Theta=\sum_H n_HH$ be a Brauer relation in $G$.
Let $F/K$ be a Galois extension of number fields with Galois group $G$. Then
\[
\prod_H |K_{2n-1}(\cO_{F^H})_{\tors}|^{n_H}
\]
is a power of 2.
\end{proposition}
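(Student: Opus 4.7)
The plan is to show, for each odd prime $p$, that the $p$-part of $\prod_H |K_{2n-1}(\cO_{F^H})_{\tors}|^{n_H}$ is trivial; the proposition then follows by multiplying over all odd $p$. Writing $T := K_{2n-1}(\cO_F)_{\tors,p}$ for the $p$-primary torsion and using the descent isomorphism $(K_{2n-1}(\cO_F)_{\Z_p})^H \cong K_{2n-1}(\cO_{F^H})_{\Z_p}$ of \S\ref{sec:Kgroups}, the claim for odd $p$ reduces to $\prod_H |T^H|^{n_H} = 1$.

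The first main step is to invoke Soul\'e's Chern class isomorphism $K_{2n-1}(\cO_F)_{\Z_p} \cong H^1_{\mathrm{et}}(\cO_F[1/p], \Z_p(n))$, whose torsion part equals $H^0(G_F, \Q_p/\Z_p(n)) = (\Q_p/\Z_p(n))^{G_F}$. This identifies $T$ with a finite cyclic $\Z_p$-module, of some order $p^k$, on which $G = \mathrm{Gal}(F/K)$ acts through a single character $\psi \colon G \to (\Z/p^k\Z)^\times$, namely the reduction of the $n$th power of the cyclotomic character modulo $p^k$.

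With that in hand, I would carry out a short character-theoretic computation. Setting $c_H := v_p(\psi(h)-1 : h \in H)$, a direct inspection gives $\log_p |T^H| = \min(c_H, k)$, which I rewrite telescopically as
\[
\log_p |T^H| \;=\; \sum_{j=1}^{k} \mathbf{1}\!\left[\psi|_H \equiv 1 \bmod p^j\right].
\]
For each $j$, the character $\psi \bmod p^j \colon G \to (\Z/p^j\Z)^\times$ embeds into $\C^\times$, producing a $1$-dimensional complex representation $\C_{\psi_j}$ of $G$; under this embedding $\mathbf{1}[\psi|_H \equiv 1 \bmod p^j] = \dim_\C (\C_{\psi_j})^H$. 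The defining property of a Brauer relation then yields $\sum_H n_H \dim_\C (\C_{\psi_j})^H = \dim_\C \Hom_G\!\left(\sum_H n_H\, \C[G/H],\, \C_{\psi_j}\right) = 0$ for every $j$. Summing over $j$ gives $\sum_H n_H \log_p |T^H| = 0$, as required.

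The main obstacle lies in the structural identification of $T$ in the second paragraph: everything hinges on $T$ being cyclic with $G$ acting through a single abelian character, and this genuinely requires Soul\'e's theorem. The restriction to $p$ odd enters here (for $p=2$ the Chern class map is subtler), which is responsible for the ``up to a power of $2$'' form of the conclusion.
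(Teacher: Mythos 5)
Your proposal is correct, and its structural core coincides with the paper's: reduce to the $p$-part for each odd $p$, use the descent isomorphism to replace $K_{2n-1}(\cO_{F^H})_{\tors,p}$ by $T^H$, and identify $T$ with $(\Q_p/\Z_p(n))^{G_F}=\mu_{p^\infty}(n)^{G_F}$, a finite cyclic $p$-group on which $G$ acts through a character (the paper gets this from \cite{Wei-05} rather than phrasing it via Soul\'e's Chern class map, but the content is the same). Where you diverge is the finishing step. The paper observes that, $p$ being odd, $\Aut(T)$ is cyclic, so the action factors through a cyclic quotient $G/U$; it then pushes the Brauer relation down to $G/U$, where it must be trivial because cyclic groups have no non-trivial Brauer relations, and concludes by citing \cite[Theorem 2.36\,(q)]{tamroot} together with $\#T^H=\#T^{UH}$. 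You instead filter $T$ by its $p^j$-torsion levels, write $\log_p|T^H|=\sum_{j}\dim_\C(\C_{\psi_j})^H$ for the complex characters $\psi_j$ obtained from $\psi \bmod p^j$, and kill each summand directly by the defining property of a Brauer relation. This is a genuinely self-contained replacement for the paper's appeal to the quotient formalism and the cited theorem, at the price of a slightly more computational argument; both routes are valid. One small correction to your closing remark: oddness of $p$ is needed not only for the identification of the torsion (the Quillen--Lichtenbaum/Chern class input) but also in your character step, since embedding the image of $\psi \bmod p^j$ into $\C^\times$ uses that $(\Z/p^j\Z)^\times$ is cyclic, which fails for $p=2$, $j\geq 3$ --- this is precisely the same use of cyclicity that the paper makes via $\Aut(T)$.
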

\begin{proof}
It suffices to show that the rational number in the Proposition
has trivial $p$-part for all odd primes $p$.
Recall (e.g. \cite{Wei-05}) that for a number field $N$ and an odd prime number $p$, the $p$-part of
$K_{2n-1}(\cO_N)_{\tors}$ is isomorphic to $\mu_{p^\infty}(n)^{G_N}$,
the fixed submodule under the absolute Galois group of the $n$--th Tate twist
of the group of $p$--power roots of unity.

Putting $W=\mu_{p^\infty}(n)^{G_F}$, we now make two observations.  First, for
every subgroup $H$ of $G$, the $p$-part of $K_{2n-1}(\cO_{F^H})_{\tors}$ is
isomorphic to $W^H$. Secondly, since $p$ is odd, the automorphism group of $W$
is cyclic. So if $U$ denotes the kernel of the map $G\rightarrow \Aut W$,
then $U$ is normal in $G$, and $G/U$ is cyclic, and in particular has no
non-trivial Brauer relations. Since for $H\leq G$, $\#W^H = \#W^{UH}$, the
result follows immediately from \cite[Theorem 2.36 (q)]{tamroot}.
\end{proof}

The following immediate consequence is noteworthy, since it greatly generalises
several previous works on tame kernels (see e.g. \cite{Wu-11,Zhou-09}):
\begin{corollary}
Let $F/K$ be a finite Galois extension of totally real number fields with
Galois group $G$, let $\Theta=\sum_H n_H H$ be a Brauer relation in $G$. Then
\[
\prod_H |K_{4n-2}(\cO_{F^H})|^{n_H}
\]
is a power of 2 for any $n\geq 1$.
\end{corollary}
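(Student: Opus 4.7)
The plan is to combine Proposition \ref{prop:Ktorsion} with the Artin formalism equation (\ref{eq:KArtin}) applied at a suitable index. Specifically, I would set $m=2n$ so that the $K$-theoretic degree $2m-2$ becomes the desired $4n-2$, and then exploit the fact that for totally real fields the Borel regulator in the resulting equation trivializes.

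First, I would rewrite (\ref{eq:KArtin}) with $m=2n$ in place of $n$, obtaining
\[
\prod_H\left(\frac{\Reg_{2n}(F^H)\,|K_{4n-2}(\cO_{F^H})|}{|K_{4n-1}(\cO_{F^H})_{\tors}|}\right)^{n_H}=_{2'}1.
\]
Next, I would observe that since $F$ is totally real, every subfield $F^H$ is totally real as well, so each $F^H$ has $r_2=0$. Because $m=2n$ is even, the integer $d=r_2$ entering the definition of the Borel regulator $\Reg_{2n}(F^H)$ is zero, and hence $K_{4n-1}(\cO_{F^H})$ has rank $0$ (it is pure torsion) and $\Reg_{2n}(F^H)$ is the empty determinant, equal to $1$. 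The displayed equation therefore collapses to
\[
\prod_H |K_{4n-2}(\cO_{F^H})|^{n_H} \;=_{2'}\; \prod_H |K_{4n-1}(\cO_{F^H})_{\tors}|^{n_H}.
\]

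Finally, Proposition \ref{prop:Ktorsion}, applied to the index $2n'-1=4n-1$ (i.e.\ with its parameter set to $2n$), says exactly that the right-hand side is a power of $2$. Combining this with the previous equality, the left-hand side is also a power of $2$, which is the claim.

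The proof is short and the only conceptual step is the vanishing of the Borel regulator in the totally real, even-index case; I do not foresee a serious obstacle, just the bookkeeping of matching the indexing convention of Proposition \ref{prop:Ktorsion} to the target $K_{4n-2}$.
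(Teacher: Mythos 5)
Your argument is correct and is essentially the same as the paper's: substitute the index $2n$ into equation (\ref{eq:KArtin}), note that total reality forces $K_{4n-1}(\cO_{F^H})$ to be torsion (so the Borel regulators are trivially $1$), and then invoke Proposition \ref{prop:Ktorsion}. You have simply spelled out the bookkeeping that the paper's two-sentence proof leaves implicit.
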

\begin{proof}
Since $F$ is totally real, $K_{4n-1}(\cO_N)$ is torsion for any subfield $N$
of $F$. The assertion therefore follows from equation (\ref{eq:KArtin})
and Proposition \ref{prop:Ktorsion}.
\end{proof}

We also get relationships between the finite
even indexed $K$-groups over various intermediate fields without assuming that
$F$ is totally real:
\begin{corollary}
Let $F/K$ be a finite Galois extension of number fields with Galois group $G$,
let $\Theta=\sum_H n_H H$ be a Brauer relation in $G$. Suppose that $N\unlhd G$
is such that $G/N$ is cyclic. Then
\[
\sum_H n_H \ord_p\left(|K_{4n-2}(\cO_{F^H})|\right) = 0
\]
for any prime $p\nmid 2|N|$ and for any integer $n\geq 1$.
\end{corollary}
\begin{proof}
This is an immediate consequence of \cite[Proposition 3.9]{Alex} combined with
equation (\ref{eq:Kgroups}) and Proposition \ref{prop:Ktorsion}.
\end{proof}

\subsection{Mordell--Weil groups of elliptic curves}\label{sec:ellcurves}
Let $E/K$ be an elliptic curve. A consequence
of the Birch and Swinnerton--Dyer conjecture and of Artin formalism for twisted
$L$--functions is
\begin{eqnarray}\label{eq:ellcurvescond}
\prod_{H\leq G}\left(\frac{C(E/F^H)\Reg(E/F^H)\#\sha(E/F^H)}{|E(F^H)_{\tors}|^2}\right)^{n_H} \stackrel{?}{=} 1,
\end{eqnarray}
where $C(E/F^H)$ denotes the product of suitably normalised Tamagawa numbers
over all finite places of $F^H$. See e.g. the introduction to \cite{Bar-09}
for the details on the normalisation of the Tamagawa numbers, which will be
immaterial for us.

Formula (\ref{eq:ellcurvescond}) can in fact be shown to be true only under
the assumption that the relevant Tate--Shafarevich
groups are finite: if we write $\Theta$ as $\sum_i H_i - \sum_j H_j'$,
then the products of Weil restrictions of scalars $\prod_i W_{F^{H_i}/K}(E)$ 
and $\prod_j W_{F^{H_j'}/K}(E)$ are isogenous abelian varieties. The claim 
therefore follows from the compatibility of the Birch and Swinnerton--Dyer
conjecture with Weil restriction of scalars \cite{Mil-72}
and with isogenies \cite[Chapter I, Theorem 7.3]{Mil-06}.

Moreover, if one only assumes that the $p$-primary parts of the Tate-Shafarevich
groups are finite, then the $p$-part of equation (\ref{eq:ellcurvescond}) holds,
in the sense that
\begin{eqnarray}\label{eq:ellcurvescondp}
\sum_{H\leq G}n_H\ord_p\left(\frac{C(E/F^H)\#\sha(E/F^H)[p^\infty]}{|E(F^H)[p^\infty]|^2}\right) =\\
\ord_p\left(\prod_{H\leq G}\Reg(E/F^H)^{-n_H}\right).\nonumber
\end{eqnarray}
Note that substituting the N\'eron--Tate height pairing on $E(F)$ in the definition
of regulator constants yields
\begin{eqnarray}\label{eq:ellregconst}
\cC_{\Theta}(E(F))=\prod_{H\leq G}\Reg(E/F^H)^{n_H}.
\end{eqnarray}
In particular, the product of regulators on the right hand side in equation
(\ref{eq:ellcurvescondp}) is a rational number
(see below), so it does make sense to consider its $p$-adic valuation.

One can also get an entirely unconditional statement by incorporating
the (conjecturally trivial) divisible parts of the Tate--Shafarevich groups
as follows. Let
\[
\phi:\bigoplus_i \Z[G/H_i]\hookrightarrow \bigoplus_j\Z[G/H_j']
\]
be an inclusion of $\Z[G]$--modules with finite cokernel (cf. \S \ref{sec:regconsts}).
Let $A$ and $B$ denote the abelian varieties $\prod_i W_{F^{H_i}/K}(E)$ and
$\prod_j W_{F^{H_j'}/K}(E)$, respectively.
Let $A^t$ and $B^t$ be the dual abelian varieties. Then $\phi$ induces an
isogeny $\phi:A\rightarrow B$ and the dual isogeny
$\phi^t:B^t\rightarrow A^t$. These in turn give maps on the divisible parts of
Tate--Shafarevich groups:
$\phi:\sha(A/K)_\text{div}\rightarrow \sha(B/K)_\text{div}$ and
$\phi^t:\sha(B^t/K)_\text{div}\rightarrow \sha(A^t/K)_\text{div}$. Denote their (necessarily
finite) kernels by $\kappa$ and $\kappa^t$, respectively.
\begin{proposition}\label{prop:bsd}
We have
\[
\prod_{H\leq G}\left(\frac{C(E/F^H)\Reg(E/F^H)\prod_{q\big||G|}\#\sha_0(E/F^H)[q^\infty]}{|E(F^H)_{\tors}|^2}\right)^{n_H}=
\frac{|\kappa^t|}{|\kappa|},
\]
where $\sha_0$ denotes the quotients by the divisible parts.
\end{proposition}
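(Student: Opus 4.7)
The plan is to reduce to the isogeny invariance of the Birch--Swinnerton-Dyer formula via Weil restriction of scalars. Setting $A=\prod_i W_{F^{H_i}/K}(E)$ and $B=\prod_j W_{F^{H_j'}/K}(E)$ as in the statement, these are self-dual abelian varieties over $K$ (since $E$ is self-dual and Weil restriction commutes with duality), and the Brauer relation $\Theta$ makes $\phi$ a $\Q[G]$-isomorphism after tensoring with $\Q$, so functoriality of Weil restriction upgrades it to a $K$-isogeny $\phi\colon A\to B$ with dual $\phi^t\colon B^t\to A^t$.

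First I would invoke the compatibility of BSD invariants with Weil restriction \cite{Mil-72} together with their multiplicativity over products of abelian varieties. This yields equalities such as $C(A/K)=\prod_i C(E/F^{H_i})$, $\Reg(A/K)=\prod_i\Reg(E/F^{H_i})$, and $\sha(A/K)=\prod_i\sha(E/F^{H_i})$, and similarly for torsion subgroups and for $B$ with the $H_j'$. Since $A^t=A$ and $B^t=B$, one has $|A(K)_{\tors}||A^t(K)_{\tors}|=\prod_i|E(F^{H_i})_{\tors}|^2$. Combining these, and unwinding that the exponents $n_H$ in the Brauer relation are $+1$ on the $H_i$ and $-1$ on the $H_j'$, the left-hand side of the proposition becomes precisely the ratio of the Birch--Swinnerton-Dyer quotients for $A/K$ and $B/K$, with $\sha$ replaced by $\sha_0$ at the primes in question.

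The second step is to apply the isogeny invariance of BSD in its unconditional form \cite[Chapter I, Theorem 7.3]{Mil-06}, building on the work of Cassels and Tate. For an isogeny $\phi\colon A\to B$ over $K$, the equality $L(A/K,s)=L(B/K,s)$ combined with the Cassels--Tate pairing relating $\sha(A/K)$ and $\sha(A^t/K)$ implies that the ratio of BSD quotients for $A$ and $B$, with Sha replaced by its quotient by divisible parts, equals $|\kappa^t|/|\kappa|$; any residual local factors $\prod_v|\phi|_v$ coming from the comparison of N\'eron differentials are absorbed into the Tamagawa normalisation of \cite{Bar-09}. Moreover, at primes $q\nmid|G|$ the map $\phi$ is an isomorphism after $\otimes\Z_q$ (because $\coker\phi$ is annihilated by a divisor of a power of $|G|$), so it is an isomorphism on $q$-primary parts of $\sha_0$; such primes contribute trivially, which is why only the primes $q\mid|G|$ appear in the $\sha_0$-product in the statement.

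The main obstacle will be the bookkeeping in the isogeny step: one must verify carefully that Milne's isogeny formula, when unwrapped with the precise normalisations for Tamagawa numbers, N\'eron differentials, and archimedean periods used throughout the paper, leaves exactly the global correction $|\kappa^t|/|\kappa|$ after all local contributions cancel. This is a delicate but essentially routine comparison of conventions, and all of the non-trivial input is contained in \cite{Mil-72} and \cite[Chapter I, Theorem 7.3]{Mil-06}.
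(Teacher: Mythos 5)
Your overall strategy — reduce the statement to an isogeny between the Weil restrictions $A=\prod_i W_{F^{H_i}/K}(E)$ and $B=\prod_j W_{F^{H_j'}/K}(E)$, use compatibility of the BSD invariants with Weil restriction (\cite{Mil-72}) and multiplicativity over products, and then invoke the unconditional isogeny-invariance of the BSD quotient (\cite[Chapter I, Theorem 7.3]{Mil-06}, and ultimately Cassels--Tate) — is precisely the approach the paper takes. Indeed, the paragraph preceding the proposition sets up exactly this isogeny and defines $\kappa$, $\kappa^t$; the paper's "proof" is then a bare citation to \cite[Theorem 4.3]{squarity} and \cite[\S 4]{Bar-09}, where this bookkeeping is carried out.

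One specific claim in your sketch is not correct as stated: you assert that only primes $q\mid|G|$ appear in the $\sha_0$-product because "$\coker\phi$ is annihilated by a divisor of a power of $|G|$", so that $\phi\otimes\Z_q$ is an isomorphism for $q\nmid|G|$. That is false in general: $\phi$ is an arbitrary injection of permutation lattices with finite cokernel, and one can replace it by $n\phi$ for any integer $n$, introducing arbitrary prime factors into $\coker\phi$ (with a corresponding change in $\kappa$ and $\kappa^t$). The correct reason the Sha-product can be truncated to $q\mid|G|$ is different: for $q\nmid|G|$ one has $\sha_0(E/F^H)[q^\infty]\cong\bigl(\sha_0(E/F)[q^\infty]\bigr)^H$ (via restriction--corestriction), and for finite $\Z_q[G]$-modules with $q\nmid|G|$ the Brauer relation forces $\prod_H\bigl(\#M^H\bigr)^{n_H}=1$ (cf.\ \cite[Theorem 2.36]{tamroot}), so those factors cancel on the left-hand side regardless of $\phi$; the matching $q$-parts of $\kappa$ and $\kappa^t$ then also cancel. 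This is a repairable gap rather than a flaw in the overall approach, which otherwise matches the paper.
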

\begin{proof}
See \cite[Theorem 4.3]{squarity} or \cite[\S 4]{Bar-09}
\end{proof}

Combining Proposition \ref{prop:bsd} with equation (\ref{eq:ellregconst}),
we obtain the unconditional statement
\begin{eqnarray}\label{eq:ellcurves}
\prod_{H\leq G}\left(\frac{|E(F^H)_{\tors}|^2|\kappa^t|}{C(E/F^H)\prod_{q\big||G|}\#\sha_0(E/F^H)[q^\infty]|\kappa|}\right)^{n_H}=\cC_\Theta(E(F)).
\end{eqnarray}

%*****************************
%----------------------
\section{Reinterpretation of regulator constants}\label{sec:regconsts}
%----------------------
%*****************************

We retain Notation \ref{not:tors}.
In light of the previous section, our aim is to express regulator constants
of an arbitrary $\Z[G]$--module in terms of the index of a submodule generated
by fixed points under various subgroups of $G$. This will be done in the next
section. Here, we prove the necessary preliminary results.
Throughout this section, $G$ is a finite group,
$\Theta = \sum_i H_i -\sum_j H_j'$ is a Brauer relation in $G$, and $M$ is
a finitely generated $\Z[G]$--module.

We begin by giving an alternative definition of regulator constants.
Write $P_1 = \oplus_i \Z[G/H_i]$, $P_2 = \oplus_j \Z[G/H_j']$.
Since $\Theta$ is a Brauer relation, there exists an injection of $\Z[G]$--modules
\[
\phi:P_1\hookrightarrow P_2
\]
with finite cokernel. Dualising this and fixing isomorphisms between the
permutation modules and their duals, we also get
\[
\phi^{\tr}: P_2 \hookrightarrow P_1.
\]
Note that if $\phi$ is given by the matrix $X$ with respect to some fixed bases
on $P_1$ and $P_2$, then $\phi^{\tr}$ is given by $X^{\tr}$ with respect to the
dual bases. Applying the contravariant functor $\Hom_G(\cdot\;,M)$, which we
abbreviate to $(\cdot\;,M)$, we obtain the maps
\begin{eqnarray*}
(\phi,M):(P_2,M) & \longrightarrow & (P_1,M)\text{ and }\\
(\phi^{\tr},M):(P_1,M) & \longrightarrow & (P_2,M).
\end{eqnarray*}

\begin{lemma}\label{lem:regconsts}
We have
\[
\cC_{\Theta}(M) = \frac{\#\coker (\phi^{\tr},M)\big/\#\ker(\phi^{\tr},M)}{\#\coker (\phi,M)\big/\#\ker(\phi,M)}\cdot
\frac{|(P_1,M)_{\tors}|^2}{|(P_2,M)_{\tors}|^2}.\]
In particular, the right hand side is independent of the $\Z[G]$--module
homomorphism $\phi$.
\end{lemma}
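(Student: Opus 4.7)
The plan is to move everything to the language of Hom spaces equipped with auxiliary pairings, rewrite both sides in terms of two Gram matrices on those spaces, and then derive the identity from an adjointness relation between $(\phi,M)$ and $(\phi^{\tr},M)$ with respect to those pairings. The final assertion (independence of the right-hand side from $\phi$) is then free, since the left-hand side $\cC_\Theta(M)$ is defined intrinsically using only the pairing on $M$.

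First I would clean up the torsion bookkeeping by applying the snake lemma. Because $\phi_\Q$ is an isomorphism and $\Hom_{\Q[G]}(-,M_\Q)$ is exact, $(\phi,M)_\Q$ is an isomorphism, so $\overline{(\phi,M)}$ is injective between $\Z$-modules of the same rank. Chasing the snake lemma applied to the rows $0\to A_{\tors}\to A\to\overline{A}\to 0$ then yields
\[
\frac{\#\coker(\phi,M)}{\#\ker(\phi,M)} = |\det\overline{(\phi,M)}|\cdot\frac{|(P_1,M)_{\tors}|}{|(P_2,M)_{\tors}|},
\]
and analogously with $\phi^{\tr}$ in place of $\phi$. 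Substituting both into the right-hand side of the lemma collapses the torsion contributions and reduces the claim to
\[
\cC_\Theta(M) = \frac{|\det\overline{(\phi^{\tr},M)}|}{|\det\overline{(\phi,M)}|}.
\]

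Next I would equip each $P_i$ with the $G$-invariant inner product for which the coset basis is orthonormal; this is exactly the pairing implicit in the definition of $\phi^{\tr}$. Combined with the given pairing on $V=M\otimes\Q$ it gives a pairing $\pi_i$ on $\Hom_G(P_i,V)$ by
\[
\pi_i(f,g) = \sum_{x}\langle f(x),g(x)\rangle_V,
\]
with $x$ running over the coset basis of $P_i$. A short $G$-invariance calculation shows that, in the natural basis of $\overline{(P_i,M)}$ inherited from $\Z$-bases of the $\overline{M^{H}}$, the Gram matrix of $\pi_1$ is block diagonal with blocks $[G:H_i]\langle\cdot,\cdot\rangle|\overline{M^{H_i}}$, and similarly for $\pi_2$. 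Because the two sides of a Brauer relation have equal total $\Z$-rank, the powers of $|G|$ cancel on forming the ratio and one obtains $\det\pi_1/\det\pi_2 = \cC_\Theta(M)$.

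The decisive step is the adjointness
\[
\pi_1\bigl((\phi,V)(f),g\bigr) = \pi_2\bigl(f,(\phi^{\tr},V)(g)\bigr),\qquad f\in (P_2,V),\ g\in(P_1,V),
\]
which I would verify by expanding $\phi(x)$ in the orthonormal basis of $P_2$ and invoking the defining property $\langle\phi(x),y\rangle_{P_2}=\langle x,\phi^{\tr}(y)\rangle_{P_1}$. Writing $A$, $B$ for the matrices of $\overline{(\phi,M)}$, $\overline{(\phi^{\tr},M)}$ and $G_1$, $G_2$ for the Gram matrices of $\pi_1$, $\pi_2$, adjointness reads $A^T G_1 = G_2 B$; taking determinants and rearranging produces the displayed target identity. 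I expect the only real obstacle to be bookkeeping: the factor $\tfrac{1}{|H|}$ in the definition of $\cC_\Theta$ must line up exactly with the $[G:H_i]$ scaling coming from $G$-invariance, and the matching powers of $|G|$ cancel only because both permutation modules have the same rank, which is precisely the condition under which $\phi$ can be injective with finite cokernel.
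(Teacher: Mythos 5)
Your proof is correct, but it takes a genuinely different path from the paper's for the key step. Both of you handle the torsion bookkeeping the same way: apply the snake lemma to the rows $0\to (P_i,M)_{\tors}\to (P_i,M)\to \overline{(P_i,M)}\to 0$, use that $\overline{(\phi,M)}$ is injective (because $(\phi,M)\otimes\Q$ is an isomorphism), and observe that the resulting torsion factors cancel against $|(P_1,M)_{\tors}|^2/|(P_2,M)_{\tors}|^2$. The divergence is in how the torsion-free identity $\cC_\Theta(M) = \#\coker\overline{(\phi^{\tr},M)}\big/\#\coker\overline{(\phi,M)}$ is established: the paper simply cites \cite[Theorem 3.2]{Alex} (with a remark that the proof there extends from torsion-free $M$ to general $M$), whereas you \emph{prove} this identity from scratch by introducing the inner products $\pi_i$ on $\Hom_G(P_i,M_\Q)$, identifying their Gram determinants with the regulator-constant factors (after the $|G|$ normalisations cancel), and reading the identity off the adjointness $A^{\tr}G_1 = G_2 B$. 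Your version is therefore self-contained where the paper delegates; the paper's version is shorter. One small imprecision worth tightening: the reason the $|G|$-powers cancel is not that $P_1$ and $P_2$ have equal $\Z$-rank, but that $\Hom_G(P_1,M_\Q)$ and $\Hom_G(P_2,M_\Q)$ have equal dimension (equivalently $\sum_i\dim M_\Q^{H_i}=\sum_j\dim M_\Q^{H'_j}$), which holds because $P_1\otimes\Q\cong P_2\otimes\Q$ as $\Theta$ is a Brauer relation; equal $\Z$-rank of $P_1$ and $P_2$ alone would not suffice.
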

\begin{proof}
Consider the commutative diagram
\[
\xymatrix{
0 \ar[r] & (P_2,M)_{\tors} \ar[d]^{(\phi,M)_{\tors}}\ar[r] & (P_2,M) \ar[d]^{(\phi,M)}\ar[r] & (P_2,M)/\tors \ar[d]^{\overline{(\phi,M)}}\ar[r] & 0\\
0 \ar[r] & (P_1,M)_{\tors} \ar[r] & (P_1,M) \ar[r] & (P_1,M)/\tors \ar[r] & 0,
}
\]
and similarly for $(\phi^{\tr},M)$.
By \cite[Theorem 3.2]{Alex},
$\cC_{\Theta}(M)=\frac{\#\coker\overline{(\phi^{\tr},M)}}{\#\coker\overline{(\phi,M)}}$
(note that in \cite{Alex}, $M$ is assumed to be torsion--free, but the proof
extends verbatim to the general case). Since $\ker\overline{(\phi,M)}$ is trivial,
the snake lemma, applied to the above diagram, implies that
\[\#\coker(\phi,M) = \#\coker(\phi,M)_{\tors}\#\coker\overline{(\phi,M)},
\]
and similarly for $(\phi^{\tr},M)$. Since both torsion subgroups are finite,
\[
\frac{\#\coker(\phi,M)_{\tors}}{\#\ker(\phi,M)_{\tors}}=
\frac{\#\ker(\phi^{\tr},M)_{\tors}}{\#\coker(\phi^{\tr},M)_{\tors}}=
\frac{|(P_1,M)_{\tors}|}{|(P_2,M)_{\tors}|},
\]
whence the result follows.
\end{proof}

\begin{proposition}\label{prop:fixedphi}
For any fixed $\phi$, the value of
\begin{eqnarray}
\frac{\#\coker(\phi,M)\cdot\#\coker(\phi^{\tr},M)}
{\#\ker(\phi,M)\cdot\#\ker(\phi^{\tr},M)}
\end{eqnarray}
only depends on the isomorphism
class of $M\otimes \Q$, and not on the integral structure of $M$.
\end{proposition}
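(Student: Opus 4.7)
The plan is to reduce the quantity to the absolute value of the determinant of a $\Q$-linear endomorphism that depends only on $\phi$ and the $\Q[G]$-module $M\otimes\Q$. This proceeds in two stages: first strip off the torsion, then identify the remaining factor as a basis-free determinant.

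For the first stage, I would apply the snake lemma to the commutative diagram appearing in the proof of Lemma \ref{lem:regconsts}, with rows $0\to(P_i,M)_{\tors}\to(P_i,M)\to\overline{(P_i,M)}\to 0$, once for $(\phi,M)$ and once for $(\phi^{\tr},M)$. Because $\phi$ has finite cokernel, $\phi\otimes\Q$ is an isomorphism, so $\overline{(P_1,M)}$ and $\overline{(P_2,M)}$ have equal rank and the maps $\overline{(\phi,M)}$, $\overline{(\phi^{\tr},M)}$ have trivial kernel. Using the elementary identity
\[
\frac{|\coker f|}{|\ker f|}=\frac{|B_{\tors}|}{|A_{\tors}|}\cdot|\coker\overline{f}|,
\]
valid for $f:A\to B$ with $\ker\overline{f}=0$, applied to $f=(\phi,M)$ and $f=(\phi^{\tr},M)$ and multiplied together, the contributions of $(P_1,M)_{\tors}$ and $(P_2,M)_{\tors}$ enter in opposite proportions and cancel, leaving
\[
\frac{\#\coker(\phi,M)\cdot\#\coker(\phi^{\tr},M)}{\#\ker(\phi,M)\cdot\#\ker(\phi^{\tr},M)}
=|\coker\overline{(\phi,M)}|\cdot|\coker\overline{(\phi^{\tr},M)}|.
\]

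For the second stage, since $P_1,P_2$ are finitely generated free $\Z$-modules, the functor $\Hom_G(P_i,-)$ commutes with $-\otimes\Q$. Writing $V=M\otimes\Q$ and $V_i=\Hom_{\Q G}(P_i\otimes\Q,V)$, the torsion-free quotient $L_i:=\overline{(P_i,M)}$ is a full sublattice of $V_i$, and $\overline{(\phi,M)}$, $\overline{(\phi^{\tr},M)}$ are the restrictions of the $\Q$-linear isomorphisms $\Phi:V_2\to V_1$ and $\Phi':V_1\to V_2$ obtained by precomposing with $\phi\otimes\Q$ and $\phi^{\tr}\otimes\Q$. The crucial point is that $V_i$, $\Phi$ and $\Phi'$ depend only on $\phi$ (which is fixed) and on the $\Q[G]$-isomorphism class of $V$. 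A routine basis calculation gives $[L_1:\Phi(L_2)]=|\det\Phi|\cdot\operatorname{vol}(L_2)/\operatorname{vol}(L_1)$ with respect to any fixed $\Q$-bases of $V_1,V_2$, and analogously for $\Phi'$ with the roles of $L_1,L_2$ swapped. Multiplying, the covolume ratio cancels, yielding
\[
|\coker\overline{(\phi,M)}|\cdot|\coker\overline{(\phi^{\tr},M)}|=|\det\Phi\cdot\det\Phi'|=|\det(\Phi\circ\Phi')|,
\]
the absolute value of the determinant of an endomorphism of $V_1$, hence basis-free. Since this quantity depends only on $\phi$ and on the isomorphism class of $V$ as a $\Q[G]$-module, the proposition follows.

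The main obstacle is the torsion bookkeeping of the first stage: one must verify carefully that the torsion subgroups of $(P_1,M)$ and $(P_2,M)$ contribute to the two factors in precisely reciprocal ways so that they cancel in the product. Once this reduction is made, the remaining statement is a standard linear-algebra identity about indices of full lattices under invertible $\Q$-linear maps, in which the dependence on only the $\Q[G]$-isomorphism class of $M\otimes\Q$ is manifest.
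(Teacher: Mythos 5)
Your proof is correct and follows essentially the same route as the paper's. The paper also first reduces to the torsion-free case via the snake lemma (with the same cancellation of $|(P_i,M)_{\tors}|$ factors), and then observes that $\#\coker\overline{(\phi,M)}\cdot\#\coker\overline{(\phi^{\tr},M)}=\#\coker\overline{(\phi\phi^{\tr},M)}$ is the expansion factor of a full-rank lattice under an endomorphism of the ambient $\Q$-vector space $\Hom_G(P_1,M\otimes\Q)$, hence independent of the lattice; your closing identification of the product of indices with $|\det(\Phi\circ\Phi')|$ is exactly this observation, phrased through covolumes rather than directly through the composed cokernel.
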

\begin{proof}
As in the previous proof, we have
\[\#\coker (\phi,M) = \#\coker(\phi,M)_{\tors}\cdot
\#\coker \overline{(\phi,M)},\]
and similarly for $(\phi^{\tr},M)$, and
\[
{\#\coker(\phi,M)_{\tors}}{\#\coker(\phi^{\tr},M)_{\tors}}=
{\#\ker(\phi^{\tr},M)}{\#\ker(\phi,M)}.
\]
So it is enough to show that
$\#\coker\overline{(\phi,M)}\cdot\#\coker\overline{(\phi^{\tr},M)}$ only depends on
$M\otimes\Q$. For the rest of the proof,
we drop the overline and assume without loss of generality that $M$ is torsion--free.
We have
\[
\#\coker (\phi,M)\cdot\#\coker (\phi^{\tr},M) = \#\coker((\phi,M)(\phi^{\tr},M))
= \#\coker (\phi\phi^{\tr},M),
\]
so it remains to prove that if $P_1=P_2$, then $\#\coker(\phi,M)$ only depends on
$M\otimes \Q$ for torsion--free $M$. Now, $\Hom_G(P_1,M)$ is a full rank
lattice in the vector space $\Hom_G(P_1,M\otimes \Q)$,
and $\#\coker(\phi,M)$, being the
expansion factor of the lattice under $\phi$, does not depend on the choice
of lattice.
\end{proof}
\begin{corollary}\label{cor:cphi}
For any fixed $\phi$, there exists a function
\[c_{\phi}(M)=c_{\phi}(M\otimes\Q)=\frac{\#\coker(\phi,M)\cdot\#\coker(\phi^{\tr},M)}
{\#\ker(\phi,M)\cdot\#\ker(\phi^{\tr},M)}
\]
that is only a function of the collection of numbers $\rk M^H$ as $H$ ranges
over the subgroups of $G$. It is
uniquely determined by its values on the irreducible rational representations
\footnote{Here and elsewhere, ``irreducible rational representation'' refers to
representations that are irreducible over $\Q$, rather than absolutely irreducible
representations that happen to be defined over $\Q$.}
of $G$.
\end{corollary}
\begin{proof}
The right hand side of the equation is clearly multiplicative in direct sums of
representations. The result follows immediately from Proposition \ref{prop:fixedphi}
and Artin's induction theorem.
\end{proof}

\begin{corollary}
For any fixed $\phi$, we have
\[
\cC_{\Theta}(M)\cdot\frac{|\coker(\phi,M)|^2}{|\ker(\phi,M)|^2} =
c_{\phi}(M)\cdot\frac{|(P_1,M)_{\tors}|^2}{|(P_2,M)_{\tors}|^2}.
\]
\end{corollary}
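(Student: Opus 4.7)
The plan is to observe that this corollary is essentially an algebraic rearrangement, combining Lemma \ref{lem:regconsts} with the definition of $c_\phi(M)$ given in Corollary \ref{cor:cphi}. No new ideas are needed; the content is already packaged in the previous two results.

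More concretely, I would start from Lemma \ref{lem:regconsts}, which gives
\[
\cC_{\Theta}(M) = \frac{\#\coker (\phi^{\tr},M)/\#\ker(\phi^{\tr},M)}{\#\coker (\phi,M)/\#\ker(\phi,M)}\cdot
\frac{|(P_1,M)_{\tors}|^2}{|(P_2,M)_{\tors}|^2},
\]
and multiply both sides by $|\coker(\phi,M)|^2/|\ker(\phi,M)|^2$. The factor $\#\coker(\phi,M)/\#\ker(\phi,M)$ appearing in the denominator on the right then combines with one of the two copies of $|\coker(\phi,M)|/|\ker(\phi,M)|$, while the remaining copy pairs up with $\#\coker(\phi^{\tr},M)/\#\ker(\phi^{\tr},M)$ to yield exactly the numerator $\#\coker(\phi,M)\cdot\#\coker(\phi^{\tr},M)$ over the denominator $\#\ker(\phi,M)\cdot\#\ker(\phi^{\tr},M)$. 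By the definition recorded in Corollary \ref{cor:cphi}, this quotient is $c_{\phi}(M)$, and the torsion factor $|(P_1,M)_{\tors}|^2/|(P_2,M)_{\tors}|^2$ survives unchanged. This establishes the stated equality.

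There is no genuine obstacle here: once Lemma \ref{lem:regconsts} is in hand and $c_\phi(M)$ has been defined, the corollary follows by cancelling factors. The only point one should double-check is that all the quantities involved are finite (so that the arithmetic of orders makes sense), which is guaranteed by the assumption that $\phi$ has finite cokernel and by the finite generation of $M$, exactly as in the proofs of Lemma \ref{lem:regconsts} and Proposition \ref{prop:fixedphi}.
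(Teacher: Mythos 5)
Your proof is correct, and it is exactly the argument the paper intends: the corollary follows by multiplying the identity of Lemma~\ref{lem:regconsts} by $|\coker(\phi,M)|^2/|\ker(\phi,M)|^2$ and recognising the resulting expression as $c_\phi(M)$ via Corollary~\ref{cor:cphi}. The paper gives no separate proof precisely because the statement is this immediate rearrangement, and your remark on finiteness of the relevant orders is the right sanity check.
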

This proves Theorem \ref{thm:R}.
The name of the game will be to choose suitable injections $\phi$, for which
$\#\coker(\phi,M)/\#\ker(\phi,M)$ can be interpreted in terms of natural
invariants of the Galois module $M$.

\section{Indices of fixed submodules}\label{sec:indices}
From now on, we closely follow \cite[\S 4]{Bart}. Suppose that we have
a subset $\cU$ of subgroups of $G$ such that
\[
\Theta = 1 + (|\cU|-1)\cdot G - \sum_{H\in\cU} H,
\]
is a Brauer relation and such that the map
\begin{eqnarray}\label{eq:phi}
\phi: \Z[G/1]\oplus\Z^{\oplus|\cU|} & \rightarrow & \Z\oplus \bigoplus_{H\in\cU} \Z[G/H]\\
(\sigma,0) & \mapsto & (1,(\sigma H)_{H\in\cU}),\nonumber\\
(0,(n_H)_{H\in\cU}) & \mapsto & (0,(n_HN_H)_{H\in\cU}),\;\;\;N_H=\sum_{g\in G/H}g\nonumber
\end{eqnarray}
is an injection of $G$--modules.
See Example \ref{ex:groups} for some families of groups that have such
Brauer relations.
\begin{remark}
It is important to note that, although we usually treat Brauer relations as
elements of the Burnside ring of $G$, so that subgroups of $G$ are only
regarded as representatives of conjugacy classes, the problem of finding a map $\phi$
as above that is injective may depend on the ``right'' choice of 
conjugacy class representatives.
\end{remark}
The map $\phi$ commutes with taking coinvariants, so we get a commutative
diagram, where the horizontal maps are the augmentation maps:
\[
\xymatrix{
\Z[G]\oplus \Z^{\oplus|\cU|} \ar[d]^\phi\ar@{->>}[r] & \Z^{|\cU|+1}\ar[d]^{\phi_G}\\
\Z\oplus\bigoplus_{H\in \cU}\Z[G/H] \ar@{->>}[r] & \Z^{\oplus|\cU|+1}.
}
\]

Finally, applying the contravariant functor $\Hom_G(\cdot,M)$, we get the commutative
diagram with exact rows
\[
\xymatrix{
0 \ar[r] & (M^G)^{\oplus|\cU|+1} \ar[d]^{(\phi_G,M)}\ar[r] & M^G\oplus\bigoplus_{H\in\cU} M^{H} \ar[d]^{(\phi,M)}\ar[r] & \bigoplus_{H\in\cU} M^{H}/M^G \ar[d]^f\ar[r] & 0\\
0 \ar[r] & (M^G)^{\oplus|\cU|+1} \ar[r] & M\oplus (M^G)^{\oplus|\cU|} \ar[r] & M/M^G \ar[r] & 0,
}
\]
where the map $f$ is induced by inclusions $M^{H}/M^G\hookrightarrow M/M^G$,
so that
\[
\#\coker f=[M:\sum_{H\in\cU} M^{H}].
\]
Thus, the snake lemma, together with the results of the previous section,
yield
\begin{eqnarray}\label{eq:final}
\frac{[M:\sum_{H\in\cU} M^{H}]^2}{|\ker f|^2} & = &
\frac{|\coker(\phi,M)|^2}{|\ker(\phi,M)|^2}\cdot\frac{|\ker(\phi_G,M)|^2}{|\coker(\phi_G,M)|^2}\nonumber\\
& = & \frac{c_{\phi}(M)}{\cC_{\Theta}(M)}\cdot
\frac{|M_{\tors}|^2\cdot|M^G_{\tors}|^{2|\cU|-2}}{\prod_{H\in\cU}|M^{H}_{\tors}|^2}\cdot
\frac{\cC_{0}(M)}{c_{\phi_G}(M)}\cdot\frac{|M^G_{\tors}|^{2|\cU|+2}}{|M^G_{\tors}|^{2|\cU|+2}}\nonumber\\
& = & \frac{c_{\phi}(M)}{c_{\phi_G}(M)\cC_{\Theta}(M)}\cdot
\frac{|M_{\tors}|^2\cdot|M^G_{\tors}|^{2|\cU|-2}}{\prod_{H\in\cU}|M^{H}_{\tors}|^2}.
\end{eqnarray}

\begin{remark}
In an arithmetic context, the torsion quotient will generically vanish. For example, if
$K$ is a number fields, $G$ is a fixed finite group, and $E/K$ is an elliptic curve,
then for a generic Galois extension $F/K$ with Galois group $G$,
$E(F)_{\tors} = E(K)_{\tors}$ (so here, we consider $M=E(F)$). More precisely,
$E(F)[p^\infty]=E(K)[p^\infty]$ for all $F/K$ of bounded degree
whenever $p$ is sufficiently large (the implicit bound only depending on $E/K$ and
on the degree of $F/K$).
If, on the other
hand, we set $M=K_{2n-1}(\cO_F)$, then the torsion quotient is a power of 2.
For $n=1$ this is due to Brauer \cite{Bra-51}, and for $n\geq 2$ this is
Proposition \ref{prop:Ktorsion}. For many concrete groups $G$ and relations
$\Theta$, the torsion quotient can be shown to always vanish.

Note also, that once we substitute
the above formula for the regulator constant into (\ref{eq:units}), (\ref{eq:Kgroups}),
or (\ref{eq:ellcurves}), the torsion quotient cancels, so in any case, it is
not present in the final index formulae.
\end{remark}
Combining equation (\ref{eq:final}) with (\ref{eq:units}), (\ref{eq:Kgroups}),
and (\ref{eq:ellcurves}) proves Theorems \ref{thm:u}, \ref{thm:k}, and \ref{thm:e},
respectively, for all groups that
have a Brauer relation of the form (\ref{eq:phi}). See the next section for
several examples of such groups. The function $\alpha([V])$ of Theorems \ref{thm:u},
\ref{thm:k} and \ref{thm:e}
is given on an irreducible representation $V$ by $c_{\phi_G}(M)/c_\phi(M)$ for
any lattice $M$ in $V$, while $\beta(H)=\cC_{\Theta}(\Z[G/H])^{-1}$.

\section{Examples}\label{sec:examples}
\begin{example}\label{ex:groups}
Examples of groups that admit a Brauer relation of the
form (\ref{eq:phi}) include the following:
\begin{itemize}
\item elementary abelian $p$--groups, with $\cU$ being the set of all
subgroups of index $p$. This recovers the main results of \cite[\S 4]{Bart},
and generalises these results to other Galois modules;
\item semidirect products $C_p\rtimes C_n$ with $p$ an odd prime and with
$C_n$ acting faithfully on $C_p$, and with $\cU$ consisting of $C_p$ and of $n$
distinct subgroups of order $n$. This includes the case of \cite[Theorem 1.1]{Alex}
and provides a vast generalisation of that result.
\item Heisenberg groups of order $p^3$, where $p$ is an odd prime,
and where $\cU$ consists of the unique normal subgroup $N$ of order $p^2$ and of
$p$ non-conjugate cyclic groups of order $p$ that are not contained in $N$.
\end{itemize}
\end{example}

We will now demonstrate that the function $\alpha$
really is explicitly computable in any concrete case.
\begin{example}
Let $G=C_5\rtimes C_4$ with $C_4$ acting faithfully on $C_5$.
As mentioned in the previous example,
\[
\Theta = 1 - 4C_4 - C_5 + 4G
\]
is a Brauer relation, and letting $\cU$ consist of $C_5$ and of 4 distinct
cyclic groups of order 4,
\[
\phi: \Z[G/1]\oplus \Z^5\rightarrow \Z\oplus \bigoplus_{H\in \cU}\Z[G/H]
\]
defined by (\ref{eq:phi}) is an injection of $G$--modules with finite cokernel.
Given a $\Z[G]$--module $M$,
the map $(\phi,M)$ corresponding to $\phi$ is given by
\begin{eqnarray*}
M^G\oplus \bigoplus_{H\in \cU} M^H & \rightarrow & M \oplus
\bigoplus_{H\in \cU} M^G\\
(m,(m_H)_H) & \mapsto & (m + \sum_{H\in\cU} m_H, (\tr_{G/H} m_H)_H),
\end{eqnarray*}
while $(\phi^{\tr},M)$ is easily seen to be
\begin{eqnarray*}
M\oplus\bigoplus_{H\in \cU} M^G & \rightarrow &
M^G\oplus\bigoplus_{H\in \cU} M^{H}\\
(m,(m_H)_{H\in \cU}) &\mapsto &
(\tr_{G/1}m,(\tr_{H/1}m + m_H)_{H\in \cU}).
\end{eqnarray*}

The following is a complete list of irreducible rational representations
of $G$:
\begin{itemize}
\item the two 1--dimensional representations $\triv=\chi_1$, $\chi_2$
that are lifted from $G/D_{10}\cong C_2$,
\item the direct sum $\rho$ of the remaining two 1--dimensional complex
representations lifted from $G/C_5\cong C_4$,
\item the 4--dimensional induction $\tau$ of a non--trivial one--dimensional
character from $C_5$ to $G$.
\end{itemize}
In each of these, we need to choose a $G$--invariant lattice. Clearly, $\chi_1$
and $\chi_2$ contain, up to isomorphism, only one $\Z[G]$--module each, which
we will denote by $M_1$, $M_2$ respectively. Next, let $\Gamma_\rho$ be the
non--trivial torsion--free $\Z[C_2]$--module of $\Z$--rank 1. Define $M_\rho$ to be
the lift from
$G/C_5\cong C_4$ of $\Ind_{C_4/C_2}\Gamma_\rho$. This is a $G$--invariant
full rank sublattice of $\rho$. As for $\tau$, it will be simpler to work
with $\tau^{\oplus 4}$, which can be realised as the induction from $C_5$ of
$\Q[C_5]\ominus \Q[C_5/C_5]$. Let $\Gamma_\tau$ be the
$\Z[C_5]$--module $\Z[C_5/1]\big/\langle\sum_{g\in C_5}g\rangle_\Z$
and define $M_\tau$ to be the induction of $\Gamma_{\tau}$ to $G$, so that
$M_\tau\otimes_\Z\Q\cong \tau^{\oplus 4}$.

Next, we need to compute the orders of cokernels $\#\coker(\phi,M)$, $\#\coker(\phi^{\tr},M)$, $\#\coker(\phi_G,M)$,
and $\#\coker(\phi_G^{\tr},M)$ for $M=M_1,M_2,M_\rho,M_\tau$, although in the case
of $M_1$ we can take a short cut:
\begin{itemize}
\item Let $M=M_1$.
%Clearly, $M^H=M$ for all $H\leq G$. Moreover, it is immediate
%from the definitions that
%$\#\coker(\phi_G,M)=\#\coker(\phi,M)=\prod_{H\in\cU}\left[G:H\right] = 4\cdot5^4$,
%$\#\coker(\phi_G^{\tr},M)=\#\coker(\phi^{\tr},M) = |G|=20$.
Clearly, the left hand side of equation (\ref{eq:final}) is trivial, so
$\alpha([M\otimes\Q])=1/\cC_\Theta(M)=125$. For all the remaining
lattices, we will have $M^G=0$, so the cokernels will be trivial on corestrictions.
\item Let $M=M_2$. Then, $M^{C_5}=M$ and $M^{C_4}=M^G=0$, so $(\phi,M)$ is surjective,
and
$\#\coker(\phi^{\tr},M) = 5$.
\item The same reasoning applies to $M=M_{\rho}$, but since $M_\rho$ has rank 2, we have
$\#\coker(\phi^{\tr},M) = 25$.
\item Let $M=M_\tau$. We have $M^{C_5} = 0$, while for any subgroup
$H$ of order 4, $M_\tau^H$ has $\Z$--rank 4. An explicit computation, either by
hand or using a computer algebra package, yields $\#\coker(\phi,M)=\#\coker(\phi^{tr},M)=5^6$.
\end{itemize}
To summarise, $\alpha([V])$ is given on the irreducible rational representations
of $G$ by
\begin{itemize}
\item $\alpha([\chi_1]) = 125$;
\item $\alpha([\chi_2]) = 1/5$;
\item $\alpha([\rho]) = 1/25$;
\item $\alpha([\tau]) = \alpha(4[\tau])^{1/4} = (5^{-12})^{1/4} = 5^{-3}$.
\end{itemize}
For an arbitrary rational representation $V$ of $G$, the multiplicities
$\langle V,\cdot\rangle$ of
the irreducible rational representations in the direct sum decomposition of $V$
are determined by $\dim V^H$ as $H$ ranges over subgroups of $G$:
\begin{eqnarray*}
\dim V^G & = & \langle V,\chi_1\rangle;\\
\dim V^{D_{10}} & = & \langle V,\chi_1\rangle + \langle V,\chi_2\rangle;\\
\dim V^{C_5} & = & \langle V,\chi_1\rangle + \langle V,\chi_2\rangle + 2\langle V,\rho\rangle;\\
\dim V^{C_4} & = & \langle V,\chi_1\rangle + \langle V,\tau\rangle.
\end{eqnarray*}
Solving for the multiplicities, we deduce, for an arbitrary rational representation
$V$,
\begin{eqnarray*}
\alpha([V]) & = & 5^{3\rk V^G}\cdot 5^{-(\rk V^{D_{10}} - \rk V^G)}\cdot25^{-(\rk V^{C_5} - \rk V^{D_{10}})/2}\cdot
5^{-3(\rk V^{C_4} - \rk V^G)}\\
& = & 5^{7\rk V^G - \rk V^{C_5} - 3\rk V^{C_4}}.
\end{eqnarray*}
\end{example}
\begin{remark}
Such calculations can be simplified further as follows:
\begin{itemize}
\item By Artin's induction theorem, the function $\alpha$ is also determined
uniquely by its values on $\Q[G/C]$ as $C$ ranges over the cyclic subgroups of $G$,
and these may be easier to calculate in specific cases, especially when it is
difficult to classify all irreducible rational representations.
Indeed, given any irreducible rational representation $V$ of $G$, some integer multiple
$V^{\oplus n}$ can be written as a $\Z$-linear combination of such permutation representations.
But since $\alpha([V])$ is a positive real number, the value of $\alpha(n[V])$ uniquely
determines $\alpha([V])$. We have already implicitly used this to calculate $\alpha([\tau])$.
\item Instead of computing $\coker(\phi,M)$ and $\coker(\phi^{\tr},M)$, one may
compute one of the cokernels together with
$\cC_\Theta(M) = \frac{\#\coker(\phi^{\tr},M)}{\#\coker(\phi,M)}$. This is particularly
helpful when combined with the previous observation, since regulator constants of
permutation representations are very easy to compute: using either \cite[Proposition 2.45]{tamroot} or
\cite[Example 2.19]{tamroot}, one finds that
\[
\cC_{\Theta}(\Z[G/C]) = |G|^{1-|\cU|} \prod_{H\in \cU} \prod_{g\in C\backslash G/H}|H^g\cap C|.
\]
Moreover, for the corresponding cokernels on the corestrictions, the regulator constant
is computed with respect to the trivial Brauer relation and is therefore 1,
so that $\#\coker(\phi_G,M) = \#\coker(\phi^{\tr}_G,M)$ for all integral representations
$M$ of $G$.
\end{itemize}
\end{remark}

\end{document}